\documentclass{amsart}
\usepackage{amssymb,amsmath}
\usepackage{amsfonts}

\newtheorem{thm}{Theorem}[section]
 \newtheorem{cor}[thm]{Corollary}
 \newtheorem{lem}[thm]{Lemma}
 
 \theoremstyle{definition}
 \newtheorem{defn}[thm]{Definition}
 \theoremstyle{remark}
 \newtheorem{rem}[thm]{Remark}
 
 \newtheorem*{hp}{Hypothesis}
 \newtheorem{ex}[thm]{Example}
 \numberwithin{equation}{section}

\begin{document}

\title[On the connectivity of the Sylow graph of a finite group]{On the connectivity of the Sylow graph of a finite group}
\author[F.G Russo]{Francesco G. Russo}
\address{
University of Palermo\newline 
\indent Department of Mathematics \newline
\indent via Archirafi 34, 90123, Palermo, Italy}
\email{francescog.russo@yahoo.com}



\date{\today}
\subjclass[2010]{20E32, 20D20, 20F17.}
\keywords{ Sylow graph of a finite group; normalizers; almost--simple groups; formations; symmetric coverings}


\maketitle



\begin{abstract}
The Sylow graph $\Gamma(G)$ of a finite group $G$ originated from recent investigations on the so--called $\mathbf{N}$--closed classes of groups. The connectivity of $\Gamma(G)$ was proved only few years ago, involving the classification of finite simple groups, and the structure of $G$ may be strongly restricted, once information on $\Gamma(G)$ are given.  The first result of the present paper deals with a condition on $\mathbf{N}$--closed classes of groups. The second  result deals with a computational criterion, related to the connectivity of $\Gamma(G)$.
\end{abstract}

\medskip


\section{Sylow graph and formations of groups}\label{s:1}

All the groups, which are considered in the present paper, are
finite. If $G$ is a group in which every Sylow subgroup is
self--normalizing, then \cite[Theorem 1]{gg2} shows that $G$ is a
$p$--group for some prime $p$. \cite[Theorem 1]{gg2} is a classical
result and belongs to a long standing line of research which
investigates the structural properties of a group, once restrictions
on its normalizers are given (see \cite[Chapter 5]{dh}). As noted in
\cite{bgh}, we may introduce the group class operator
$\mathbf{N}$:
\begin{equation}\mathbf{N}(\mathfrak{X})=(G: N_G(G_p)\in \mathfrak{X}, \forall
p\in \pi(G)),\end{equation} where $\mathfrak{X}$ is a class of groups, $\pi(G)$ is the set of prime divisors of
the order of $G$ and $G_p$ denotes the Sylow $p$-- subgroup of $G$, briefly $G_p\in \mathrm{Syl}_p(G)$.
$\mathbf{N}$ was largely studied in \cite{b1, b3, bgh, d1, d2, d3, d4, d5, kmp}. Originally,
\cite[Corollary 2]{gg2} shows that the class $\mathfrak{E}_p$ of all
$p$--groups is $\mathbf{N}$--closed. More generally, \cite[Theorem 2]{bgh} shows that the class $\mathfrak{N}$ of all nilpotent groups is $\mathbf{N}$--closed. A further improvement is \cite[Theorem 2]{b1},
where it is considered the class of all $p$--nilpotent groups. In the class $\mathfrak{S}$ of all solvable
groups,  relations among $\mathbf{N}$ and classes of groups which are closed with respect to forming subgroups (briefly, $\mathbf{S}$--closed), were investigated in \cite{d1,d2}. These researches continued in
\cite{d3, d4, d5}, introducing some technical notions, which refer to ideas and techniques in the theory of
formations of groups due to Gasch\"utz, Lubesender and Shemetkov. The terminology is standard in literature and can be found in \cite{dh}. A significant notion
is the following.

\begin{defn}[See \cite{d2}]\label{d:1} Assume that $\pi$ is a set of primes.
A covering $\mathcal{R}=(\pi(p))_{p \in \mathbb{P}}$ is called
$symmetric$ in $\pi$, if the following properties hold:
\begin{itemize}
\item[(i)]  $\pi={\underset{p\in \pi}\bigcup}\pi(p),$ \item[(ii)]
$p\in \pi(p)$, for each $p\in \pi$, \item[(iii)] if $q\in \pi(p)$,
then $p\in\pi(q)$.
\end{itemize}
Let $f_{\mathcal{R}}$ be the formation function, defined by
\begin{equation}f_{\mathcal{R}}(p) = \left\{\begin{array}{lcl} \mathfrak{E}_{\pi(p)}, \,\,\mathrm{if}\,\,p\in \pi\\
\emptyset, \,\,\,\,\,\,\,\,\,\,\mathrm{if}\,\,p\not\in
\pi.\end{array}\right.\end{equation} The saturated formation
$LF(f_{\mathcal{R}})=\mathfrak{E}_{\mathcal{R}},$ locally defined by
$f_{\mathcal{R}}$, is said to be a $covering$--$formation$,
associated to $\mathcal{R}$. The intersection
$\mathfrak{E}_{\mathcal{R}}\cap \mathfrak{S}$ is said to be a
covering--formation of solvable groups, associated to $\mathcal{R}$.
If $\mathcal{R}$ is a partition of $\pi$,
$\mathfrak{E}_{\mathcal{R}}$ is called $lattice$--$formation$.
\end{defn}

In \cite{b1, b3, bgh, d1, d2, d3, d4, d5} the
covering--formations and the lattice--formations originated the
following graph, which appears in \cite{d5} for the first time.

\begin{defn}[See \cite{d5}]\label{d:2}
Let $G$ be a group and $p, q$ two distinct primes in $\pi(G)$. Then
there is a sequence $p_1=p, p_2, \ldots, p_n=q$ in $\pi(G)$ such
that either $p_i$ divides $|N_G(G_{p_{i+1}}):C_G(G_{p_{i+1}})|$ or
$p_{i+1}$ divides $|N_G(G_{p_i}):C_G(G_{p_i})|,$ for each $i=1,2,
\ldots,n$. When this happen, we write briefly $p\approx q$.
$\Delta(G)$ denotes the graph with set of vertices $\pi(G)$ and
edges given by the relation $\approx$.
\end{defn}
The connectivity of $\Delta(G)$ was hard to investigate in the form
of Definition \ref{d:2}. In fact it was reformulated under a
different prospective by Kazarin and others in \cite{kmp}. These
authors study the following graph, known as the $Sylow$ $graph$ $of$
$G$.

\begin{defn}[See \cite{kmp}] \label{d:3}
In a group $G$ define the automiser $A_p(G)$ to be the group
$N_G(G_p)/G_pC_G(G_p)$. The Sylow graph $\Gamma(G)$ of $G$, with set
of vertices $\pi(G)$, is given by the following rules: Two vertices
$p, q \in \pi(G)$ form an edge of $\Gamma_A(G)$ if either $q \in
\pi(A_p(G))$ or $p \in \pi(A_q(G))$. Briefly, we write $p
\rightarrow q$ if $q \in \pi(A_p(G))$.
\end{defn}
It is easy to see that $p\rightarrow q$ implies $p\approx
q$, since $|N_G(G_p):G_pN_G(G_p)|\leq |N_G(G_p):C_G(G_p)|$. In
particular, if $G_p$ is abelian, then $G_p\subseteq C_G(G_p)$ and we
conclude that $\Delta(G)=\Gamma(G)$. Therefore the two graphs are
related and we can summarize all in the following observation. 

\begin{rem}While $\Delta(G)$ admits loops, $\Gamma(G)$ does not admit loops. This fact does not affect the connectivity of $\Delta(G)$ and $\Gamma(G)$ so that $\Delta(G)$ is connected if, and only if, $\Gamma(G)$ is connected.
\end{rem}

The connectivity of $\Gamma(G)$ was  proved in \cite[Main Theorem]{kmp} and influenced heavily the first versions of the present paper. An immediate consequence, conjectured in \cite{d5}, is that the lattice--formations are $\mathbf{N}$--closed. Therefore $\Gamma(G)$ allows us to decide whether a class of groups is $\mathbf{N}$--closed or not.
In Section 2 we describe some abstract conditions, which are useful for the same scope, without using the methods of the graph theory. Since
the properties of $\Gamma(G)$ are related to those of the Sylow
normalizers of $G$, it is used the classification
of finite simple groups in \cite{kmp}. In Section 3 we illustrate a  criterion for the connectivity of $\Gamma(G)$, dealing with a computational method. In this way, we avoid the classification of finite simple groups. 

\section{Some abstract conditions}
We begin with some examples to become confident with
$\mathbf{N}$--closed classes.

\begin{ex}[See \cite{kmp}, Examples 2,3 and Remark in Section 5]\label{e:1} Consider $\pi=\{2,3,5\},$ $\pi(2)=\{2,3,5\},$
$\pi(3)=\{2,3\},$ $\pi(5)=\{2,5\}$ and
$\mathcal{R}=\pi(2)\cup\pi(3)\cup \pi(5)$. Definition \ref{d:1} is
satisfied and $\mathcal{R}$ is symmetric in $\pi$. Note that the
alternating groups on 5 elements belong to
$\mathbf{N}(\mathfrak{E}_{\mathcal{R}}) \setminus
\mathfrak{E}_{\mathcal{R}}$. This suggests that there is not a
characterization of covering--formations in the universe
$\mathfrak{E}$ of all finite groups in terms of $\mathbf{N}$.
\end{ex}
Example \ref{e:1} shows the following fact. If $E$ is a simple
group, belonging to a covering--formation, then $E$ is a
$\pi(p)$--group, for each $p\in \pi(E)$, because $E$ is clearly
the only chief factor of itself.

\begin{rem}\label{r:1} Still from Definition \ref{d:1}, the lattice--formation, related to the minimal partition of $\pi$,
is the formation $\mathfrak{N}_{\pi}$ of all nilpotent
$\pi$--groups. In particular,
$\mathfrak{N}_{\mathbb{P}}=\mathfrak{N}$. The lattice--formation,
related to the maximal partition of $\pi$, is $\mathfrak{E}_\pi$. In
particular, $\mathfrak{E}_{\mathbb{P}}=\mathfrak{E}$. From
\cite{b3}, the lattice--formation, related to a partition
$(\pi_i)_{i\in I}$ of $\pi\subseteq \mathbb{P}$, is the class of the
groups which are direct product of $\pi_i$--groups.
\end{rem}

\begin{ex}\label{e:2}
Still from Definition \ref{d:1}, the class $\mathfrak{D}$ of all
groups which have odd order is both a lattice--formation and a
covering--formation of solvable groups, thanks to a well--known
result of J. Thompson and W. Feit. More precisely,
$\mathfrak{D}=LF(f_{\mathcal{R}})$, where
$f_{\mathcal{R}}(2)=\emptyset$ and
$f_{\mathcal{R}}(p)=\mathfrak{S}_{2'}$ if $p\not=2.$ Note that
$\mathfrak{D}=\mathfrak{S}_{2'}$ and the characteristic of the
formation $\mathfrak{D}$ (see \cite{dh} for the terminology)  is
equal to $2'=\mathbb{P}\setminus \{2\}$.
\end{ex}
The above considerations show that saturated formations, which are
$\mathbf{S}$--closed and $\mathbf{N}$--closed, could not be
covering--formations. This was already noted in \cite[Section
3]{d4}. The following condition is more easy to check in a computational way.

\begin{hp}\label{h:1} Assume that in a group $G$, 
if  $p\in \pi(G)$ and $p$ is odd, then there exists a prime $q$ such that $q<p$ and $q$ divides
$|N_G(G_p):C_G(G_p)|$ for each $G_p\in \mathrm{Syl}_p(G)$.
\end{hp}

If Hypothesis is true, then $\Delta(G)$ is connected and, consequently, $\Gamma(G)$ is connected. Unfortunately, the converse is false, as shown below.

\begin{ex}\label{e:3} Let $H=PSL_2(3^{3^k})$ for any integer $k\geq1$ and
consider $G=H\langle \sigma \rangle$, where
\begin{equation}
\sigma : \left(
\begin{array}{ccccccc}
a & b\\
c & d\\
\end{array} \right)
\in PSL_2(3^{3^k})\longmapsto \left(
\begin{array}{ccccccc}
a^3 & b^3\\
c^3 & d^3\\
\end{array}\right)\in PSL_2(3^{3^k}).
\end{equation}
$G$ is an almost--simple group, i.e.: $\mathrm{Inn} (S) \leq G \leq \mathrm{Aut} (S)$ for some non--abelian
simple group $S$, and $\{2,3\}\subseteq \pi(G)$. Now $G$ does not satisfies Hypothesis, because  $2$ does not
divides $|N_G(G_3):C_G(G_3)|$. At the same time, one can see that there is a prime $l\in \pi(G) \setminus
\{2,3\}$ such that $2$ divides $|N_G(G_l):C_G(G_l)|$ and $3$ divides $|N_G(G_l):C_G(G_l)|$ so that $2\approx 3$.
\end{ex}

Saturated formations, which are both $\mathbf{S}$--closed and
$\mathbf{N}$--closed, are characterized in \cite[Theorem 2]{d4} and
yields to interesting constructions as in \cite[Section 3]{d4} or in
\cite[Section 5]{kmp}. We recall just one case.

\begin{ex}\label{e:fundamental} Consider a set of primes $\pi=\{p\}\cup(\pi\setminus\{p\})=\{p\} \cup \omega$ such that there exist a non--abelian simple
$\omega$--group. Then the formation function $f$, defined by $f(q) =
\emptyset$, if $q\not\in\pi$;  $f(q)=\mathfrak{S}_{\pi}$, if $q=p$;
$f(q)= \mathfrak{E}_{\pi}$, if $q\in\omega$, determines the
saturated formation $\mathfrak{F}=LF(f)$, locally defined by $f$,
which is in many situations both $\mathbf{S}$--closed and
$\mathbf{N}$--closed but not a covering--formation. See
\cite[Section 3]{d4} for details.
\end{ex}

We may extend Example \ref{e:fundamental} in a more appropriate way. Recall
that $G^\infty$ denotes the largest perfect subgroup of a group $G$.
Recall also that $\mathfrak{F}$ in Example \ref{e:fundamental} is the class of
all finite groups $G$ such that whenever $H/K$ is a chief--factor of
$G$ and $q$ is a prime dividing $|H/K|$, then $G/C_G(H/K)\in f(q)$.

\begin{lem} \label{l:1}  $G\in \mathfrak{F}$ if and only if
$G$ is a $\pi$--group and $G^\infty$ is a $p'$--group.
\end{lem}
\begin{proof}
Suppose $G\in \mathfrak{F}$. If $q$ is a prime divisor of $|G|$ then
there exists a chief factor $H/K$ of $G$ such that $q$ divides $
|H/K|$. Then $G/C_G(H/K)\in f(q)$. Thus $f(q)$ is not empty and so
$q\in \pi$. So $G$ is a $\pi$--group.
Suppose for a contradiction that $G^{\infty}$ is not a $p'$--group.
Then there exists a chief factor $H/K$ of $G$ such that $H\leq
G^\infty$, $p$ divides $ |H/K|$ and $G^{\infty}/H$ is a $p'$--group.
It follows that $G/C_G(H/K)\in f(p)$ and so $G/C_G(H/K)$ is
solvable. Thus $G^{\infty}\leq C_G(H/K)$ and so $H/K\leq
Z(G^{\infty}/K)$. Since $p$ divides $|H/K|$, $H/K$ is a
$p$--elementary abelian group. Since $G^{\infty}/K$ is $p'$--group,
we conclude that $G^{\infty}/K=H/K\times L/K$  for some subgroup
$L/K$ of $G^{\infty}/K$. This is a contradiction, since
$G^{\infty}/K$ is perfect and $H/K$ is abelian.

Suppose next that $G$ is a $\pi$--group and $G^{\infty}$ is a
$p'$--group. Let $H/K$ be a chief factor of $G$ and $q$ a prime
dividing $|H/K|$. Since $G$ is a $\pi$--group, $q\in \pi$ and
$G/C_G(H/K)$ is a $\pi$--group.  So if $q\neq p$ we have
$G/C_G(H/K)\in f(q)$. Suppose $q=p$. Since $[H/K,G']=[H,G']K/K$ is a
$G$--invariant $p'$--subgroup of $H/K$, we conclude that $G'\leq
C_G(H/K)$ and so $G/C_G(H/K)$ is solvable. Hence again
$G/C_G(H/K)\in f(q)$. So $G\in \mathfrak{F}$.
\end{proof}

\begin{cor} $\mathfrak{F}$ is $\mathbf{S}$--closed. \end{cor}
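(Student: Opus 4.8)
The plan is to reduce everything to the characterization already established in Lemma \ref{l:1}, which says that $G\in\mathfrak{F}$ precisely when $G$ is a $\pi$--group and $G^\infty$ is a $p'$--group. Since $\mathbf{S}$--closure means closure under subgroups, I would take an arbitrary $G\in\mathfrak{F}$ and an arbitrary subgroup $H\leq G$, and verify that $H$ satisfies the two conditions of Lemma \ref{l:1}.

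First I would dispose of the $\pi$--group condition: because $|H|$ divides $|G|$, every prime divisor of $|H|$ is a prime divisor of $|G|$, and since $G$ is a $\pi$--group this forces $H$ to be a $\pi$--group as well. The second condition, that $H^\infty$ is a $p'$--group, is the crux. Here the key observation is the monotonicity of the operator $(\,\cdot\,)^\infty$ under inclusion: since $H\leq G$, the subgroup $H^\infty$ is a perfect subgroup of $G$, and $G^\infty$ is by definition the \emph{largest} perfect subgroup of $G$, so $H^\infty\leq G^\infty$. As $G^\infty$ is a $p'$--group by hypothesis, its subgroup $H^\infty$ is also a $p'$--group.

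Having checked both conditions, I would conclude by Lemma \ref{l:1} that $H\in\mathfrak{F}$, which is exactly the assertion that $\mathfrak{F}$ is $\mathbf{S}$--closed. There is no serious obstacle in this argument; the only point requiring a moment's care is the inclusion $H^\infty\leq G^\infty$, which rests solely on the maximality in the definition of $G^\infty$ together with the perfectness of $H^\infty$, and this is precisely why phrasing the class $\mathfrak{F}$ through the largest perfect subgroup (rather than directly through chief factors) makes the $\mathbf{S}$--closure transparent.
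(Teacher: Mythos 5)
Your proof is correct and follows exactly the paper's route: the paper's entire proof is the one-line remark that the corollary ``follows immediately from Lemma \ref{l:1}'', and your argument simply spells out the two details left implicit there (that a subgroup of a $\pi$--group is a $\pi$--group, and that $H^\infty\leq G^\infty$ by perfectness of $H^\infty$ and maximality of $G^\infty$). Both verifications are sound, so your write-up is a fully correct expansion of the paper's intended argument.
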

\begin{proof}This follows immediately from Lemma \ref{l:1}.
\end{proof}
In the next results we will use the $generalized$ $Fitting$
$subgroup$ $F^*(G)$ of a group $G$ and the $components$ $of$ $G$.
These are well--known notions, which can be found in
\cite[p.580]{dh}.

\begin{lem}\label{l:2} Let $H/K$ be a non--abelian chief--factor
of a group $G$, $L$ a component of $H/K$,
$\overline{U}=N_{G/K}(L)/C_{G/K}(L)$, $q$ a prime, $T\in
\mathrm{Syl}_q( \overline{U})$  and $S\in \mathrm{Syl}_q(G)$. Then
$F^*(\overline{U})=\overline{L}\cong L$ and $N_{\overline{
L}}(T)/\overline{L}\cap T$ is isomorphic to a subgroup of
$N_H(S)/(S\cap H)K$.
\end{lem}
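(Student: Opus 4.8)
The statement splits into the two assertions $F^{*}(\overline{U})=\overline{L}\cong L$ and the embedding of $N_{\overline{L}}(T)/(\overline{L}\cap T)$ into $N_{H}(S)/(S\cap H)K$. The plan is to dispose of the first quickly and to treat the second by transporting the Sylow data of the almost--simple group $\overline{U}$ up into $G$ through the projection $G\to G/K$.

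For the first assertion, I note that $C_{G/K}(L)\leq N_{G/K}(L)$, so conjugation makes $\overline{U}=N_{G/K}(L)/C_{G/K}(L)$ act faithfully on $L$, realising $\overline{U}$ as a subgroup of $\mathrm{Aut}(L)$. Since $L$ is a non--abelian simple component of $H/K$, we have $Z(L)=1$, whence the image $\overline{L}$ of $L$ in $\overline{U}$ is $\mathrm{Inn}(L)\cong L$ and $\mathrm{Inn}(L)\leq\overline{U}\leq\mathrm{Aut}(L)$; thus $\overline{U}$ is almost--simple with socle $\overline{L}$. Because $C_{\mathrm{Aut}(L)}(\mathrm{Inn}(L))=1$ we get $C_{\overline{U}}(\overline{L})=1$, so the Fitting subgroup and every component of $\overline{U}$ other than $\overline{L}$ centralise $\overline{L}$ and are therefore trivial, leaving $F^{*}(\overline{U})=\overline{L}\cong L$, as in the description of $F^{*}$ in \cite[p.580]{dh}.

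For the second assertion I would first fix compatible Sylow subgroups. Write $H/K=L\times M$, where $M=C_{H/K}(L)$ is the product of the remaining components, both factors being normalised by $N:=N_{G/K}(L)$, and recall that $\overline{L}$ is the common image of $L$ and of $H/K$ in $\overline{U}$. Choosing $Q_{0}\in\mathrm{Syl}_{q}(N)$ and extending it to a Sylow $q$--subgroup of $G/K$ of the form $\overline{S}=SK/K$ with $S\in\mathrm{Syl}_{q}(G)$, one arranges $\overline{S}\cap N=Q_{0}$, $\overline{S}\cap(H/K)=(S\cap H)K/K=:P$, and takes $T=Q_{0}\,C_{G/K}(L)/C_{G/K}(L)\in\mathrm{Syl}_{q}(\overline{U})$; all choices being conjugate, this costs no generality, and a direct check gives $\overline{L}\cap T$ equal to the image of $P\cap L\in\mathrm{Syl}_{q}(L)$. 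The heart of the argument is then the homomorphism $\psi\colon N_{H}(S)\to\overline{U}$ obtained by composing $h\mapsto hK$ with the projection $N\to N/C_{G/K}(L)$; its image lies in $\overline{L}$. Since any $h\in N_{H}(S)$ normalises $S$, its image normalises $\overline{S}$ and hence $Q_{0}=\overline{S}\cap N$, so $\psi(h)$ normalises $T$ and $\psi(N_{H}(S))\leq N_{\overline{L}}(T)$. A Frattini argument inside $N$, together with the factorisation $H/K=L\times M$, is then used to produce, for each element of $N_{\overline{L}}(T)$, a representative in $N_{H}(S)$, yielding surjectivity onto $N_{\overline{L}}(T)$; one finally computes $\psi^{-1}(\overline{L}\cap T)$ and, via the isomorphism theorems, identifies the quotient with a section of $N_{H}(S)$ modulo $(S\cap H)K$.

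I expect the main obstacle to be the exact matching of the denominator. The kernel of $\psi$ and the preimage of $\overline{L}\cap T$ a priori involve not only $(S\cap H)K$ but also the complementary components $M=C_{H/K}(L)$ and the part of $K$ lying in $N_{H}(S)$; showing that precisely these contributions are absorbed---so that $N_{\overline{L}}(T)/(\overline{L}\cap T)$ is realised as a \emph{subgroup} of $N_{H}(S)/(S\cap H)K$ rather than merely as a subquotient of $N_{H}(S)$---is the delicate point. It is here that the projection of $H/K$ onto the single factor $L$ and the control of $C_{G/K}(L)$ must be used carefully.
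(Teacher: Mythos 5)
Your treatment of the first assertion is correct and actually more detailed than the paper, which dismisses $F^*(\overline U)=\overline L\cong L$ as obvious; your compatible choice of $S$, $Q_0$ and $T$ is also legitimate and corresponds to the paper's normalization $R=N_S(L)$, $T=\overline R$ (after reducing to $K=1$). The gap is in the second half, exactly at the two steps you defer. First, surjectivity of $\psi$: a Frattini argument inside $N=N_{G/K}(L)$ can only produce, for an element of $N_{\overline U}(T)$, a representative normalizing $Q_0=\overline S\cap N$; it gives no control over the part of $\overline S$ outside $N$ (the elements moving $L$ to other components), and no reason the representative can be chosen inside $H$ normalizing the \emph{full} Sylow $S$ of $G$, which is what membership in $N_H(S)$ demands. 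The element that actually works is not produced by Frattini but by a diagonal construction, which is the paper's key device: for $x\in N_L(R)$ and $s_1,\dots,s_k$ a transversal to $R=N_S(L)$ in $S$, the element $h=\prod_i x^{s_i}$ of $M_0=\langle L^S\rangle=L^{s_1}\times\cdots\times L^{s_k}$ satisfies $h^{-1}h^s\in S\cap M_0$ for all $s\in S$ (using $[x,R]\le L\cap R$), hence lies in $N_H(S)$ and maps onto $\bar x$ modulo $\overline L\cap T$.

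Second, and more seriously, even granting surjectivity your construction proves the wrong containment. With $\psi$ defined on all of $N_H(S)$ you obtain $N_{\overline L}(T)/(\overline L\cap T)\cong N_H(S)/D$ where $D=\psi^{-1}(\overline L\cap T)$, and $D$ in general strictly contains $N_H(S)\cap(S\cap H)K$: it absorbs every element of $N_H(S)$ supported on components outside the $S$-orbit of $L$, together with $q'$-contributions from $C_{H/K}(L)$. For instance, if $S$ normalizes two components $L$ and $L_2$ (which happens even when $G$ is transitive on components, since $S$ need not be), any $q'$-element $b\in L_2$ normalizing $S$ gives an element of $D\setminus(S\cap H)K$. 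So your $\psi$ realizes $N_{\overline L}(T)/(\overline L\cap T)$ as a \emph{proper quotient} of $N_H(S)/(S\cap H)K$, and a quotient need not embed as a subgroup, which is what the lemma asserts. You correctly flag this as ``the delicate point,'' but the resolution is not a more careful computation with $\psi$ on $N_H(S)$; it is a different decomposition. The paper restricts to the subgroup $M_0=\langle L^S\rangle$ (the $S$-orbit of $L$ --- not your $M=C_{H/K}(L)$), where transitivity of $S$ on the factors $L^{s_i}$ links the coordinates of any $h\in N_{M_0}(S)$, forcing $h\in S\cap M_0$ as soon as its $L$-coordinate lies in $L\cap R$; this yields $N_{\overline L}(T)/(\overline L\cap T)\cong N_L(R)/(L\cap R)\cong N_{M_0}(S)/(S\cap M_0)$, and $N_{M_0}(S)(S\cap H)/(S\cap H)$ is then honestly a subgroup of $N_H(S)/(S\cap H)$. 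As it stands your proposal is a plan whose two essential steps are missing, and the tool you name for one of them would not close it.
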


\begin{proof} The first statement is obvious. To simplify the notation we may
replace $G$ by $G/K$ and assume that $K=1$. Put $R=N_S(L)$. We also
may assume that $T=\overline{R}$.  Then
$N_{\overline{L}}(T)/\overline{L}\cap T\cong N_L(R)/L\cap R$. Let
$s_1,\ldots s_k$ be a transversal to $R$ in $S$ with $s_1=1$ and put
$L_i=L^{s_i}$. Then $M=\langle L^S\rangle=L_1\times L_2\times \ldots
L_k$ and it is now easy to see that $N_M(S)/S\cap M\cong
N_L(R)/L\cap R$.\end{proof}
\begin{cor}\label{c:2}Let $G$, $L$ and $\overline{U}$ be as in Lemma
\ref{l:2}. If $G \in \mathbf{N}(\mathfrak{F})$, then $\overline{U}
\in \mathbf{N}(\mathfrak{F})$.
\end{cor}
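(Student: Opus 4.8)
The plan is to verify the defining condition of $\mathbf{N}(\mathfrak{F})$ for $\overline{U}$ one prime at a time, using the characterisation of $\mathfrak{F}$ supplied by Lemma \ref{l:1}. So I fix a prime $q$ and $T\in\mathrm{Syl}_q(\overline{U})$, and I aim to prove $N_{\overline{U}}(T)\in\mathfrak{F}$, i.e. that $N_{\overline{U}}(T)$ is a $\pi$--group whose largest perfect subgroup $N_{\overline{U}}(T)^{\infty}$ is a $p'$--group. The $\pi$--group half is immediate: since $G\in\mathbf{N}(\mathfrak{F})$, every Sylow normaliser of $G$ lies in $\mathfrak{F}$ and is a $\pi$--group by Lemma \ref{l:1}, so $\pi(G)\subseteq\pi$ and $G$ is a $\pi$--group; as $\overline{U}$ is a section of $G$, also $\overline{U}$, and hence $N_{\overline{U}}(T)$, is a $\pi$--group. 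In particular $q\in\pi(\overline{U})\subseteq\pi(G)$, so the groups invoked below are meaningful.

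Next I record the structural picture and reduce to the socle. By Lemma \ref{l:2}, $F^{*}(\overline{U})=\overline{L}\cong L$, and since $L$ is non--abelian simple this forces $C_{\overline{U}}(\overline{L})=Z(\overline{L})=1$; hence $\overline{U}$ is almost--simple with socle $\overline{L}$, and $\overline{U}/\overline{L}\hookrightarrow\mathrm{Out}(\overline{L})$ is solvable. The perfect group $N_{\overline{U}}(T)^{\infty}$ therefore maps trivially into $\overline{U}/\overline{L}$, so $N_{\overline{U}}(T)^{\infty}\leq N_{\overline{U}}(T)\cap\overline{L}=N_{\overline{L}}(T)$; being a perfect subgroup of $N_{\overline{L}}(T)$, it lies in the largest perfect subgroup $N_{\overline{L}}(T)^{\infty}$. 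Consequently, once I know $N_{\overline{L}}(T)\in\mathfrak{F}$, the reduction is complete: $N_{\overline{U}}(T)^{\infty}\leq N_{\overline{L}}(T)^{\infty}$ is a $p'$--group, and together with the $\pi$--group property this gives $N_{\overline{U}}(T)\in\mathfrak{F}$.

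To control $N_{\overline{L}}(T)$ I pass the hypothesis $G\in\mathbf{N}(\mathfrak{F})$ through Lemma \ref{l:2}. Choosing $S\in\mathrm{Syl}_q(G)$, one has $N_G(S)\in\mathfrak{F}$; the $\mathbf{S}$--closedness of $\mathfrak{F}$ gives $N_H(S)=N_G(S)\cap H\in\mathfrak{F}$, and since $\mathfrak{F}$ is a formation, hence closed under quotients, also $N_H(S)/(S\cap H)K\in\mathfrak{F}$. By Lemma \ref{l:2} the group $N_{\overline{L}}(T)/(\overline{L}\cap T)$ embeds into $N_H(S)/(S\cap H)K$, so $N_{\overline{L}}(T)/(\overline{L}\cap T)\in\mathfrak{F}$ by $\mathbf{S}$--closedness, whence $\bigl(N_{\overline{L}}(T)/(\overline{L}\cap T)\bigr)^{\infty}$ is a $p'$--group. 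When $q\neq p$ the normal subgroup $\overline{L}\cap T$ is a $q$--group, thus a $p'$--group; since the image of $N_{\overline{L}}(T)^{\infty}$ in the quotient is a perfect subgroup and hence a $p'$--group, while the kernel $N_{\overline{L}}(T)^{\infty}\cap(\overline{L}\cap T)$ is a $p'$--group, I conclude that $N_{\overline{L}}(T)^{\infty}$ is a $p'$--group and $N_{\overline{L}}(T)\in\mathfrak{F}$, as needed.

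The remaining case $q=p$ is the main obstacle. Here $\overline{L}\cap T\in\mathrm{Syl}_p(\overline{L})$ is a \emph{normal} Sylow $p$--subgroup of $N_{\overline{L}}(T)$, so the embedding above controls $N_{\overline{L}}(T)^{\infty}$ only modulo a normal $p$--subgroup and does not formally force $N_{\overline{L}}(T)^{\infty}$ to be a $p'$--group: a perfect group may well possess a non--trivial normal Sylow $p$--subgroup, so the conclusion genuinely fails at the purely formation--theoretic level and must exploit the simplicity of $\overline{L}$. The hard step is therefore to show that the perfect residual of the Sylow $p$--normaliser centralises $\overline{L}\cap T$, equivalently that $N_{\overline{L}}(T)^{\infty}$ is a $p'$--group; this is exactly the point at which the fine structure of Sylow normalisers in (almost--)simple groups, and hence the classification of finite simple groups already invoked in Section \ref{s:1}, enters. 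Granting this fact, $N_{\overline{L}}(T)\in\mathfrak{F}$ for every prime $q$, and the reduction of the second paragraph yields $N_{\overline{U}}(T)\in\mathfrak{F}$, that is, $\overline{U}\in\mathbf{N}(\mathfrak{F})$.
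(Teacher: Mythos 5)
Your proposal does not prove the corollary: the case $q=p$, which you yourself isolate as ``the main obstacle'', is not proved but only granted. Everything up to that point is sound and is in fact a faithful expansion of the paper's entire proof, which consists of the single sentence that the claim follows from Lemma \ref{l:2} because $\mathfrak{F}$ is closed under sections: your $\pi$--group argument, the reduction to the socle via solvability of the outer automorphism group, and the extension argument for $q\neq p$ all match that intent. The gap at $q=p$ is genuine (your observation that $\mathfrak{F}$ is not extension--closed and that perfect groups can have non--trivial normal Sylow $p$--subgroups is correct, and it shows the paper's one--line justification is itself over--compressed at this point), but the route you indicate for closing it would fail. You present the missing step as a structural fact about Sylow $p$--normalisers of (almost--)simple groups, to be imported from the classification. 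No such fact exists: a finite simple group can have a Sylow $p$--normaliser whose largest perfect subgroup has order divisible by $p$. Take $L=A_{35}$ and $p=7$: a Sylow $7$--subgroup $Q\cong C_7^5$ is generated by five disjoint $7$--cycles, and $N_{S_{35}}(Q)=(7{:}6)\wr S_5$. Let $V_0$ be the subgroup of the base group $C_6^5$ consisting of tuples with product $1$, and put $X=Q\rtimes(V_0\rtimes A_5)$. One checks $[Q,V_0]=Q$ and $[V_0,A_5]=V_0$, while $A_5$ is perfect, so $X$ is perfect; being perfect, $X\leq A_{35}$, hence $N_{A_{35}}(Q)^\infty\supseteq X$ has order divisible by $7$. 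So the statement you want to quote is false as a statement about simple groups; the only possible source of the conclusion is the hypothesis $G\in\mathbf{N}(\mathfrak{F})$, which your discussion of the case $q=p$ never uses.

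That is the missing idea: for $q=p$ you must exploit $N_G(S)\in\mathfrak{F}$, i.e.\ that $N_G(S)^\infty$ is a $p'$--group. Concretely, if $E$ is a perfect $p'$--subgroup of $N_G(S)$ normalising a subgroup $A\leq S$, then coprime action gives $[A,E,E]=[A,E]$, so $[A,E]E$ is a perfect subgroup of $N_G(S)$; it therefore lies in the $p'$--group $N_G(S)^\infty$, forcing $[A,E]=1$. Applying this with $A=S\cap M$, where $M=\langle L^S\rangle$ as in the proof of Lemma \ref{l:2}, every perfect $p'$--subgroup of $N_M(S)$ centralises $S\cap M$; and since the isomorphism $N_{\overline{L}}(T)/(\overline{L}\cap T)\cong N_M(S)/(S\cap M)$ constructed in that proof is induced by the projection of $M$ onto the component $L$ (so it is compatible with the conjugation action on the Sylow subgroups, and the relevant maps are surjective on the full normalisers, not only on the quotients), the property transports: every perfect $p'$--subgroup of $N_{\overline{L}}(T)$ centralises $\overline{L}\cap T$. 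This forces $N_{\overline{L}}(T)^\infty$ to be a $p'$--group: otherwise Schur--Zassenhaus writes $N_{\overline{L}}(T)^\infty=P_0\rtimes E$ with $P_0=N_{\overline{L}}(T)^\infty\cap T\neq 1$ its normal Sylow $p$--subgroup and $E$ perfect of $p'$--order; then $E$ centralises $P_0$, so $N_{\overline{L}}(T)^\infty=P_0\times E$ is not perfect, a contradiction. Note that this argument uses the construction inside the proof of Lemma \ref{l:2}, not merely its statement, and no classification input beyond the Schreier conjecture you already invoked; your diagnosis that the obstruction is real was correct, but its cure lies in the hypothesis on $G$ at the prime $p$, not in the structure of simple groups.
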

\begin{proof} Since $\mathfrak{F}$ is closed under sections, this follows
from Lemma \ref{l:2}.
\end{proof}
Note that a group $U$ is almost--simple if and only if $F^*(U)$ is
simple.
\begin{thm}\label{t:2}$\mathfrak{F}=\mathbf{N}(\mathfrak{F})$ if and only the following statements
are true:
\begin{itemize} \item[(a)] If $U$ is a non--solvable almost--simple group in $\mathbf{N}(\mathfrak{F})$,  then $F^*(U)$ is a
$p'$--group. \item[(b)] If $L$ is a non--abelian finite simple
$(\pi\setminus \{p\})$--group and $S\in
\mathrm{Syl}_p(\mathrm{Aut}(L))$, then $C_L(S)$ is non--solvable.
\end{itemize}
\end{thm}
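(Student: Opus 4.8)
The plan is to reduce the equivalence to the single inclusion $\mathbf{N}(\mathfrak{F})\subseteq\mathfrak{F}$. Since $\mathfrak{F}$ is $\mathbf{S}$--closed, every $G\in\mathfrak{F}$ has all its Sylow normalizers in $\mathfrak{F}$, so $\mathfrak{F}\subseteq\mathbf{N}(\mathfrak{F})$ unconditionally; thus $\mathfrak{F}=\mathbf{N}(\mathfrak{F})$ if and only if $\mathfrak{F}$ is $\mathbf{N}$--closed. I record two facts used throughout: first, if $G\in\mathbf{N}(\mathfrak{F})$ then $G$ is automatically a $\pi$--group, because each $N_G(G_r)\in\mathfrak{F}$ is a $\pi$--group and hence $r\in\pi$ for every $r\in\pi(G)$; second, a Frattini argument shows that $\mathbf{N}(\mathfrak{F})$ is closed under homomorphic images, as $\mathfrak{F}$ is a formation. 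By Lemma \ref{l:1}, for a $\pi$--group $G$ one has $G\notin\mathfrak{F}$ exactly when some chief factor $H/K$ with $p\mid|H/K|$ satisfies that $G/C_G(H/K)$ is non--solvable, and I organize the argument according to whether such a bad factor is abelian or non--abelian.

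For the necessity of (a): if $U$ is non--solvable almost--simple and $U\in\mathbf{N}(\mathfrak{F})=\mathfrak{F}$, then $U^\infty$ is a $p'$--group by Lemma \ref{l:1}, while $F^*(U)$ is the non--abelian simple socle, which is perfect and hence contained in $U^\infty$; thus $F^*(U)$ is a $p'$--group. For the necessity of (b) I argue contrapositively: assuming some non--abelian simple $(\pi\setminus\{p\})$--group $L$ has $C_L(S)$ solvable for $S\in\mathrm{Syl}_p(\mathrm{Aut}(L))$, I build a witness in $\mathbf{N}(\mathfrak{F})\setminus\mathfrak{F}$. Set $X=\langle\mathrm{Inn}(L),S\rangle$, an almost--simple group with $X_p=S\in\mathrm{Syl}_p(X)$, choose a faithful irreducible $\mathbb{F}_pX$--module $R$, and put $G=R\rtimes X$. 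Since $X$ acts irreducibly and non--trivially and $X^\infty$ is perfect, one gets $R\le G^\infty$, so $p\mid|G^\infty|$ and $G\notin\mathfrak{F}$; the Sylow--normalizer computation of the sufficiency part (where solvability of $N_X(S)$, coming from that of $C_L(S)$, yields membership) shows $G\in\mathbf{N}(\mathfrak{F})$.

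For the sufficiency I take a counterexample $G\in\mathbf{N}(\mathfrak{F})\setminus\mathfrak{F}$ of least order. By the two facts above every proper homomorphic image of $G$ lies in $\mathfrak{F}$, so the $\mathfrak{F}$--residual $R:=G^{\mathfrak{F}}$ is the unique minimal normal subgroup of $G$; moreover $G$ is a $\pi$--group, and since $G/R\in\mathfrak{F}$ forces $G^\infty R/R=(G/R)^\infty$ to be a $p'$--group while $p\mid|G^\infty|$, one deduces $R\le G^\infty$ and $p\mid|R|$. If $R=L_1\times\cdots\times L_k$ is non--abelian with $L_i\cong L$, then $R$ is a non--abelian chief factor with $p\mid|L|$; applying Corollary \ref{c:2} and Lemma \ref{l:2} to the component $L$, the almost--simple group $\overline{U}=N_G(L)/C_G(L)$ lies in $\mathbf{N}(\mathfrak{F})$ and is non--solvable, so (a) gives that $F^*(\overline{U})\cong L$ is a $p'$--group, contradicting $p\mid|L|$.

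There remains the case $R$ elementary abelian of exponent $p$. Since $\mathfrak{F}$ is saturated and $G\notin\mathfrak{F}$, the residual $R$ is not contained in $\Phi(G)$, hence is complemented: $G=R\rtimes X$ with $C_G(R)=R$ and $X\cong G/R\in\mathfrak{F}$ acting faithfully and irreducibly on $R$; here $X$ is non--solvable, because $R$ is the only chief factor on which $G$ can act non--solvably (as $G/R\in\mathfrak{F}$) and that non--solvability is exactly $G\notin\mathfrak{F}$. Choosing $X_p\in\mathrm{Syl}_p(X)$ and $P=R\rtimes X_p\in\mathrm{Syl}_p(G)$, one computes $N_G(P)=R\rtimes M$ with $M=N_X(X_p)$, whence, using that $M^\infty\le X^\infty$ is a $p'$--group and $R$ is a $p$--group acted on coprimely, $N_G(P)^\infty=[R,M^\infty]\rtimes M^\infty$. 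As $N_G(P)\in\mathfrak{F}$, its perfect core is a $p'$--group, forcing $[R,M^\infty]=1$ and, by faithfulness, $M^\infty=1$; thus $N_X(X_p)$ is solvable. The main obstacle is the final step, converting this into a violation of (b): from a component $K$ of $X$, with $L=K/Z(K)$ a non--abelian simple $(\pi\setminus\{p\})$--group, one passes to the almost--simple section $N_X(K)/C_X(K)\le\mathrm{Aut}(L)$ and must show that solvability of $N_X(X_p)$ forces $C_L(S)$ solvable for a \emph{full} $S\in\mathrm{Syl}_p(\mathrm{Aut}(L))$. Matching the Sylow $p$--subgroup of this section to a full Sylow $p$--subgroup of $\mathrm{Aut}(L)$—controlling how $X_p$ meets and normalizes $K$ and permutes the $X^\infty$--components—is the delicate point, and it is also where the construction in the necessity of (b) must be arranged so that \emph{every} Sylow normalizer of $G=R\rtimes X$, not only the $p$--one, lies in $\mathfrak{F}$.
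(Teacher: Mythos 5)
Your reduction to $\mathbf{N}(\mathfrak{F})\subseteq\mathfrak{F}$, the necessity of (a), and the skeleton of your sufficiency argument (minimal counterexample, unique minimal normal subgroup $R=G^{\mathfrak{F}}$ with $p\mid |R|$, the non--abelian case via (a) together with Lemma \ref{l:2} and Corollary \ref{c:2}, and the computation $N_G(P)=R\rtimes N_X(X_p)$ forcing $N_X(X_p)$ solvable) are all sound. But the two places where you defer the work are genuine gaps, and they are exactly the two devices that the paper's proof supplies.

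\textbf{First gap (necessity of (b)).} Your witness $G=R\rtimes X$ is never shown to lie in $\mathbf{N}(\mathfrak{F})$, and in general it does not. For $r\neq p$ one may take $X_r\in\mathrm{Syl}_r(X)$ with $X_r\leq \mathrm{Inn}(L)$ (as $X/\mathrm{Inn}(L)$ is a $p$--group), and a Dedekind--plus--coprimality computation gives $N_G(X_r)=C_R(X_r)\rtimes N_X(X_r)$, whose largest perfect subgroup is $[C_R(X_r),N_X(X_r)^\infty]\rtimes N_X(X_r)^\infty$ (here $N_X(X_r)^\infty\leq X^\infty\cong L$ is a $p'$--group). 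By Lemma \ref{l:1}, membership of $N_G(X_r)$ in $\mathfrak{F}$ forces $[C_R(X_r),N_X(X_r)^\infty]=1$, and the hypothesis that $C_L(S)$ is solvable gives no control over this commutator: as soon as some $N_X(X_r)$ has non--trivial perfect radical acting non--trivially on the fixed points $C_R(X_r)$, that Sylow $r$--normalizer falls outside $\mathfrak{F}$ and your witness is not a witness. This is precisely the role of the scalar subgroup $D$ in the paper's construction $H=V(D\times LS)$, with $D$ the Hall $(\pi\setminus\{p\})$--subgroup of $\mathbb{F}^\sharp$ acting fixed--point--freely on $V$: for every prime $q\neq p$ dividing $|H|$ and every $T\in\mathrm{Syl}_q(H)$ one has $VT\cap D\neq 1$, hence $C_V(T)=C_V(VT)\leq C_V(VT\cap D)=1$, so $N_H(T)\cap V=1$ and $N_H(T)^\infty$ is a $p'$--group. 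Thus the only Sylow normalizer that can fail to be in $\mathfrak{F}$ is the one at $p$, which is what yields the non--solvability of $N_L(S)=C_L(S)$. You flag this issue in your final sentence, but without $D$ (or a substitute with the same fixed--point--free property) this half of the theorem is unproved.

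\textbf{Second gap (sufficiency).} The ``delicate point'' you name does not need to be resolved, because you are using (b) in the wrong direction. There is no need to show that the Sylow $p$--subgroup $T$ of the almost--simple section is a \emph{full} Sylow $p$--subgroup of $\mathrm{Aut}(L)$: centralizers are antitone in the subgroup, so choosing $S^*\in\mathrm{Syl}_p(\mathrm{Aut}(L))$ with $T\leq S^*$, hypothesis (b) gives $C_L(S^*)$ non--solvable, hence $C_L(T)\supseteq C_L(S^*)$ is non--solvable. Since $L$ is a $p'$--group, $N_{\overline{L}}(T)=C_{\overline{L}}(T)$, and Lemma \ref{l:2} embeds this group into a section of $N_G(P)$ for $P\in\mathrm{Syl}_p(G)$, contradicting the solvability of $N_G(P)=R\rtimes N_X(X_p)$ which you have already established; this monotonicity is exactly how the paper passes from (b) to its assertion (\ref{claim:12}). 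One further repair: take $L$ to be a simple component of a non--abelian \emph{chief factor} of $G$ above $R$ (such a factor exists because $X\cong G/R$ is non--solvable, and it is a $p'$--group, being an image of a subgroup of $X^\infty$), rather than ``a component of $X$'': a non--solvable group need not possess components at all (a perfect group of shape $2^4{:}A_5$ has trivial layer), whereas Lemma \ref{l:2} is stated for components of chief factors, which always exist.
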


\begin{proof} Suppose first that $\mathfrak{F}=\mathbf{N}(\mathfrak{F})$. Let  $U$ be an
almost--simple group in $\mathbf{N}(\mathfrak{F})$. Since
$\mathfrak{F}=\mathbf{N}(\mathfrak{F})$, we conclude that $L\in
\mathfrak{F}$ and so $U^\infty$ is a $p'$--group. Since $U$ is
non--solvable, $F^*(U)$ is perfect and so  $F^*(U)\leq U^\infty$ and
$F^*(U)$ is a $p'$--group.
Let $L$ be a finite simple $(\pi\setminus \{p\})$--group and $S\in
\mathrm{Syl}_p(\mathrm{Aut}(L))$. Let $\mathbb{F}$ be a finite field
of characteristic $p$ such that $q$ divides $|\mathbb{F}^\sharp|$
for all $q\in \pi$ with $q\neq p$ ($\mathbb{F}^\sharp$ denotes the
multiplicative group of $\mathbb{F}$). Let $D$ be the $(\pi\setminus
\{p\})$--Hall subgroup of $\mathbb{F}^\sharp$. Let  $V$ be a
faithful $\mathbb{F}LS$--module. Then $V$ is also a module for
$D\times LS$ and we can consider the semidirect product $H=V(D\times
LS)$. Note that $H$ is a $\pi$--group. Also $[V,L]\leq H^\infty$ and
so $H^\infty$ is not a $p'$--group. So $H\not\in \mathfrak{F}$.
Since $\mathfrak{F}=\mathbf{N}(\mathfrak{F})$, this gives $H\not\in
\mathbf{N}(\mathfrak{F})$. Hence there exists $q\in \pi$ and $T\in
\mathrm{Syl}_q(H)$ such that $N_H(T)\not\in \mathfrak{F}$. Since
$N_H(R)$ is a $\pi$--group, this means that $N_H(T)^\infty$ is not a
$p'$--group. Since $N_H(T)^\infty\leq H^\infty\leq VL$ and $L$ is a
$p'$--group we conclude that $N_H(T)\cap V\neq 1$.
 Assume that $q=p$. Then $T\cap
V=1$ and $1\neq N_H(T)\cap V=C_V(T)=C_V(VT)$. Since $T$ is a Sylow
$q$--subgroup of $H$ and $q$ divides $|D|$, we get $VT\cap D\neq 1$.
But each non--trivial element of $D$ acts fixed point freely on $V$
and so $C_V(VT\cap D)=1$, a contradiction to $C_V(VT)\neq 1$.
Thus $q=p$ and we may assume that $T=VS$. Then $N_H(T)=VDN_L(S)$
and, since $N_H(T)$ is non--solvable, we conclude that $N_L(S)$ is
non--solvable.
\medskip
Suppose next that  (a) and (b) hold.  We will first show that
\begin{equation}\label{claim:11}  \mathrm{Any} \ \mathrm{non}-\mathrm{abelian} \   \mathrm{composition} \  \mathrm{factor} \  L \ \mathrm{of} \  G \in
\mathbf{N}(\mathfrak{F}) \  \mathrm{is} \  \mathrm{a}  \
p'-\mathrm{group}.\end{equation}
 Indeed by Lemma \ref{l:2} and Corollary \ref{c:2}, $L\cong
F^*(\overline{U})$ for the simple  group $\overline{U} \in
\mathbf{N}(\mathfrak{F})$. Thus by (a), $L$ is a $p'$--group. So
(\ref{claim:11}) holds.

Suppose for a contradiction that there exists $G\in
\mathbf{N}(\mathfrak{F})\setminus \mathfrak{F}$. Then  there exist a
chief--factor $H/K$ and a prime $q$ dividing $|H/K|$ such that
$G/C_G(H/K)\not\in f(q)$. Since $G$ is a $\pi$--group, this implies
that $q=p$ and $G/C_G(H/K)$ is non--solvable. Thus $G^\infty\nleq
C_G(H/K)$ and there exists a normal subgroup $M$ of $G$ maximal with
$C_G(H/K)\leq M<G$.  Observe that $G^\infty/M$ is a non--abelian
chief factor of $G$.   Let $L$ be a composition factor of
$G^\infty/M$. By (\ref{claim:11}), $L$ is a $p'$--group.  From (b)
and Lemma \ref{l:2} we conclude that $N_{G^\infty}(S)M/M$ is
non--solvable and so
\begin{equation}\label{claim:12}N_G(S)^\infty\nleq
C_G(H/K).\end{equation}
 Since $G\in \mathbf{N}(\mathfrak{F})$,
$N_G(S)^\infty$ is a $p'$--group. Since $[N_G(S)^\infty,S]\leq
N_G(S)^\infty\cap S,$ we conclude that  $N_G(S^\infty)\leq C_G(S)$.
By (\ref{claim:11}), $H/K$ is a $p$--group. Thus $H=(S\cap H)K$ and
so \begin{equation}[N_G(S)^\infty,H]\leq
[N_G(S^\infty),S]K=K,\end{equation} which is in contradiction with
(\ref{claim:12}).
\end{proof}
\begin{rem} (a) and (b) in Theorem \ref{t:2} follow from:
\medskip
(c)\quad   If $L$ is a finite simple $\pi$--group, then
$\mathrm{Aut}(L)$ is a $p'$--group.
\end{rem}

\section{A computational method}
In the present section we illustrate an algorithm in GAP (see \cite{gap}), which allows us to decide  whether $\Delta(G)$ is connected or not in an empiric way. The times of answer of the oracle are quite long, but, in case we use Hypothesis, they are significatively reduced. The present algorithm has been called ROSN1, 
achronymus of "Restrictions On Sylow Normalizers". In Appendix the reader may find the description of $\Delta(G)$ for all the sporadic simple groups ($\textrm{Div}(k)$ denotes the set of divisors of the integer
$k\geq1$). These results are deduced by looking at \cite{atl1} and confirm the empiric results which we get on ROSN1.  \\
\\
\texttt{TestGroup:=function(G)\\
\\
local pi,p,S,x,N,C,Q,R,i,res;\\
\medskip
pi:=AsSet(FactorsInt(Size(G))); \\
S:=List(pi,p->SylowSubgroup(G,p));\\
N:=List(S,x->Normalizer(G,x));\\
C:=List(S,x->Centralizer(G,x));\\
Q:=List([1..Size(pi)],i->Size(N[i])/Size(C[i]));\\
R:=List([1..Size(pi)],i->ForAny(FactorsInt(Q[i]),p->p<pi[i]));\\
\medskip
if pi[1]=2 then\\
res:=ForAll(R\{[2..Size(pi)]\},x->x);\\
else res:=ForAll(R,x->x);\\
fi;\\
return
res;\\
 end;}

\section*{acknowledgement}  The present work was supported by GNSAGA (Indam, Florence, Italy) in the years
2008--09. This allowed me to visit Michigan State University (USA), TU of Darmstadt (Germany), Free University of
Berlin (Germany) and University Technology of Malaysia (Malaysia), finding the inspiration for the present paper.


\section{Appendix}

\scriptsize{

\begin{itemize}
\item[(1)] For $G=M_{11}$, $\pi(M_{11})=\{2,3,5,11\}$.
$2\in \textrm{Div} (|N_{M_{11}}(G_3):C_{M_{11}}(G_3)|)\cap
\textrm{Div} (|N_G(G_5):C_G(G_5)|)\cap \textrm{Div}
(|N_G(G_{11}):C_G(G_{11})|),$ $ 5\in \textrm{Div}
(|N_G(G_{11}):C_G(G_{11})|)$. These data are enough.
\item[(2)] For $G=M_{12}$, $\pi(M_{12})=\{2,3,5,11\}$. $2\in \textrm{Div} (|N_G(G_3):C_G(G_3)|)\cap \textrm{Div} (|N_G(G_5):C_G(G_5)|)\cap \textrm{Div} (|N_G(G_{11}):C_G(G_{11})|),$ $5\in \textrm{Div} (|N_G(G_{11}):C_G(G_{11})|),$
These data are enough.
\item[(3)] For $G=J_1$, $\pi(J_1)=\{2,3,5,7,11, 19\}$. $2\in \textrm{Div} (|N_G(G_3):C_G(G_3)|)\cap \textrm{Div} (|N_G(G_5):C_G(G_5)|),$ $ 3\in \textrm{Div} (|N_G(G_7):C_G(G_7)|)\cap \textrm{Div} (|N_G(G_{19}):C_G(G_{19})|),
$ $5\in \textrm{Div} (|N_G(G_{11}):C_G(G_{11})|)$. These data are
enough.
\item[(4)] For $G=M_{22}$, $\pi(M_{22})=\{2,3,5,7,11\}$. $2\in \textrm{Div} (|N_G(G_3):C_G(G_3)|)\cap \textrm{Div} (|N_G(G_5):C_G(G_5)|),$ $3\in \textrm{Div} (|N_G(G_7):C_G(G_7)|),$ $5\in \textrm{Div} (|N_G(G_{11}):C_G(G_{11})|).$
These data are enough.
\item[(5)]  For $G=J_2$,  $\pi(J_2)=\{2,3,5,7\}$. $2\in \textrm{Div} (|N_G(G_3):C_G(G_3)|)\cap \textrm{Div} (|N_G(G_5):C_G(G_5)|), 3\in \textrm{Div} (|N_G(G_7):C_G(G_7)|).$
These data are enough.
\item[(6)] For $G=M_{23}$,  $\pi(M_{23})=\{2,3,5,7,11,23\}$. $2\in \textrm{Div} (|N_G(G_3):C_G(G_3)|)\cap \textrm{Div} (|N_G(G_5):C_G(G_5)|),$ $3\in \textrm{Div} (|N_G(G_7):C_G(G_7)|),$
$5\in \textrm{Div} (|N_G(G_{11}):C_G(G_{11})|),$ $11 \in
\textrm{Div} (|N_G(G_{23}):C_G(G_{23})|)$. These data are enough.
\item[(7)] For $G=H_5$,  $\pi(H_5)=\{2,3,5,7,11\}$. $2\in \textrm{Div} (|N_G(G_3):C_G(G_3)|)\cap \textrm{Div} (|N_G(G_5):C_G(G_5)|),$
$3\in \textrm{Div} (|N_G(G_7):C_G(G_7)|),$ $5\in \textrm{Div}
(|N_G(G_{11}):C_G(G_{11})|).$ These data are enough.
\item[(8)]For $G=J_3$,  $\pi(J_3)=\{2,3,5,17,19\}$. $2\in \textrm{Div} (|N_G(G_3):C_G(G_3)|)\cap \textrm{Div} (|N_G(G_5):C_G(G_5)|) \cap \textrm{Div} (|N_G(G_7):C_G(G_7)|),$
$3\in \textrm{Div} (|N_G(G_{19}):C_G(G_{19})|).$ These data are
enough.
\item[(9)] For $G=M_{24}$, $\pi(M_{24})=\{2,3,5,17,19\}$. $2\in \textrm{Div} (|N_G(G_3):C_G(G_3)|)\cap \textrm{Div} (|N_G(G_5):C_G(G_5)|) \cap \textrm{Div} (|N_G(G_7):C_G(G_7)|),$
$3\in \textrm{Div} (|N_G(G_{19}):C_G(G_{19})|).$ These data are
enough.
\item[(10)] For $G=McL$, $\pi(McL)=\{2,3,5,7,11\}$.$2\in \textrm{Div} (|N_G(G_3):C_G(G_3)|)\cap \textrm{Div} (|N_G(G_5):C_G(G_5)|),$ $3\in \textrm{Div} (|N_G(G_{7}):C_G(G_{7})|),$ $5\in \textrm{Div}
(|N_G(G_{11}):C_G(G_{11})|).$ These data are enough.
\item[(11)] For $G=He$, $\pi(He)=\{2,3,5,7,17\}$. $2\in \textrm{Div} (|N_G(G_3):C_G(G_3)|)\cap \textrm{Div} (|N_G(G_5):C_G(G_5)|)\cap \textrm{Div} (|N_G(G_{17}):C_G(G_{17})|),$
$3\in \textrm{Div} (|N_G(G_{7}):C_G(G_{7})|).$ These data are
enough.
\item[(12)]For $G=Ru$,  $\pi(Ru)=\{2,3,5,7,13,29\}$. $2\in \textrm{Div} (|N_G(G_3):C_G(G_3)|)\cap \textrm{Div} (|N_G(G_5):C_G(G_5)|),$
$3\in \textrm{Div} (|N_G(G_{7}):C_G(G_{7})|)\cap \textrm{Div}
(|N_G(G_{13}):C_G(G_{13})|),$ $7\in \textrm{Div}
(|N_G(G_{29}):C_G(G_{29})|).$ These data are enough.
\item[(13)]For $G=Suz$, $\pi(Suz)=\{2,3,5,7,11,13\}$. $2\in \textrm{Div} (|N_G(G_3):C_G(G_3)|)\cap \textrm{Div} (|N_G(G_5):C_G(G_5)|),$
$3\in \textrm{Div} (|N_G(G_{7}):C_G(G_{7})|)\cap \textrm{Div}
(|N_G(G_{13}):C_G(G_{13})|),$ $5\in \textrm{Div}
(|N_G(G_{11}):C_G(G_{11})|).$ These data are enough.
\item[(14)] For $G=O'N$, $\pi(O'N)=\{2,3,5,7,11,19,31\}$. $2\in \textrm{Div} (|N_G(G_3):C_G(G_3)|)\cap \textrm{Div} (|N_G(G_5):C_G(G_5)|),$
$3\in \textrm{Div} (|N_G(G_{7}):C_G(G_{7})|)\cap \textrm{Div}
(|N_G(G_{19}):C_G(G_{19})|),$ $5\in \textrm{Div}
(|N_G(G_{11}):C_G(G_{11})|)\cap \textrm{Div}
(|N_G(G_{31}):C_G(G_{31})|).$ These data are enough.
\item[(15)]For $G=Co3$, $\pi(Co3)=\{2,3,5,7,11,23\}$.
$2\in \textrm{Div} (|N_G(G_3):C_G(G_3)|)\cap \textrm{Div}
(|N_G(G_5):C_G(G_5)|),$ $3\in \textrm{Div}
(|N_G(G_{7}):C_G(G_{7})|),$ $5\in \textrm{Div}
(|N_G(G_{11}):C_G(G_{11})|),$ $11\in \textrm{Div}
(|N_G(G_{23}):C_G(G_{23})|).$ These data are enough.
\item[(16)]For $G=Co2$, $\pi(Co2)=\{2,3,5,7,11,23\}$.
$2\in \textrm{Div} (|N_G(G_3):C_G(G_3)|)\cap \textrm{Div}
(|N_G(G_5):C_G(G_5)|),$ $3\in \textrm{Div}
(|N_G(G_{7}):C_G(G_{7})|),$ $5\in \textrm{Div}
(|N_G(G_{11}):C_G(G_{11})|),$ $11\in \textrm{Div}
(|N_G(G_{23}):C_G(G_{23})|).$ These data are enough.
\item[(17)]For $G=Co1$, $\pi(Co1)=\{2,3,5,7,11,13,23\}$.
$2\in \textrm{Div} (|N_G(G_3):C_G(G_3)|)\cap \textrm{Div}
(|N_G(G_5):C_G(G_5)|),$ $3\in \textrm{Div}
(|N_G(G_{7}):C_G(G_{7})|)\cap \textrm{Div}
(|N_G(G_{13}):C_G(G_{13})|),$ $5\in \textrm{Div}
(|N_G(G_{11}):C_G(G_{11})|),$ $11\in \textrm{Div}
(|N_G(G_{23}):C_G(G_{23})|).$ These data are enough.
\item[(18)]For $G=Fi_{22}$, $\pi(Fi_{22})=\{2,3,5,7,11,13\}$.
$2\in \textrm{Div} (|N_G(G_3):C_G(G_3)|)\cap \textrm{Div}
(|N_G(G_5):C_G(G_5)|),$ $3\in \textrm{Div}
(|N_G(G_{7}):C_G(G_{7})|)\cap \textrm{Div}
(|N_G(G_{13}):C_G(G_{13})|),$ $5\in \textrm{Div}
(|N_G(G_{11}):C_G(G_{11})|).$ These data are enough.
\item[(19)] For$G=Fi_{24}$, $\pi(Fi_{24})=\{2,3,5,7,11,13,17,23,29\}$. $2\in \textrm{Div} (|N_G(G_3):C_G(G_3)|)\cap \textrm{Div} (|N_G(G_5):C_G(G_5)|)\cap \textrm{Div} (|N_G(G_{17}):C_G(G_{17})|),$
$3\in \textrm{Div} (|N_G(G_{7}):C_G(G_{7})|)\cap \textrm{Div}
(|N_G(G_{13}):C_G(G_{13})|),$ $5\in \textrm{Div}
(|N_G(G_{11}):C_G(G_{11})|),$ $7\in \textrm{Div}
(|N_G(G_{29}):C_G(G_{29})|),$ $11\in \textrm{Div}
(|N_G(G_{23}):C_G(G_{23})|),$ These data are enough.
\item[(20)] For $G=HN$, $\pi(HN)=\{2,3,5,7,11,19\}$. $2\in \textrm{Div} (|N_G(G_3):C_G(G_3)|)\cap \textrm{Div} (|N_G(G_5):C_G(G_5)|),$ $3\in \textrm{Div} (|N_G(G_{7}):C_G(G_{7})|)\cap \textrm{Div} (|N_G(G_{19}):C_G(G_{19})|),$
$5\in \textrm{Div} (|N_G(G_{11}):C_G(G_{11})|).$ These data are
enough.
\item[(21)] For $G=Ly$,  $\pi(Ly)=\{2,3,5,7,11,31,37,67\}$. $2\in \textrm{Div} (|N_G(G_3):C_G(G_3)|)\cap \textrm{Div} (|N_G(G_5):C_G(G_5)|),$
$3\in \textrm{Div} (|N_G(G_{7}):C_G(G_{7})|)\cap \textrm{Div}
(|N_G(G_{37}):C_G(G_{37})|),$ $5\in \textrm{Div}
(|N_G(G_{11}):C_G(G_{11})|)\cap \textrm{Div}
(|N_G(G_{31}):C_G(G_{31})|)l,$ $11\in \textrm{Div}
(|N_G(G_{37}):C_G(G_{37})|).$ These data are enough.
\item[(22)] For $G=Fi_{23}$, $\pi(Fi_{24})=\{2,3,5,7,11,13,17,23\}$.
$2\in \textrm{Div} (|N_G(G_3):C_G(G_3)|)\cap \textrm{Div}
(|N_G(G_5):C_G(G_5)|)\cap \textrm{Div} (|N_G(G_{17}):C_G(G_{17})|),$
$3\in \textrm{Div} (|N_G(G_{7}):C_G(G_{7})|)\cap \textrm{Div}
(|N_G(G_{13}):C_G(G_{13})|),$ $5\in \textrm{Div}
(|N_G(G_{11}):C_G(G_{11})|),$ $11\in \textrm{Div}
(|N_G(G_{23}):C_G(G_{23})|).$ These data are enough.
\item[(23)]For $G=Th$, $\pi(Th)=\{2,3,5,7,11,13,19,31\}$.
$2\in \textrm{Div} (|N_G(G_3):C_G(G_3)|)\cap \textrm{Div}
(|N_G(G_5):C_G(G_5)|),$ $3\in \textrm{Div}
(|N_G(G_{7}):C_G(G_{7})|)\cap \textrm{Div}
(|N_G(G_{13}):C_G(G_{13})|)\cap \textrm{Div}
(|N_G(G_{19}):C_G(G_{19})|),$ $5\in \textrm{Div}
(|N_G(G_{31}):C_G(G_{31})|).$ These data are enough.
\item[(24)]For $G=J_4$,  $\pi(J_4)=\{2,3,5,7,11,23,29,31,37,43\}$.
$2\in \textrm{Div} (|N_G(G_3):C_G(G_3)|)\cap \textrm{Div}
(|N_G(G_5):C_G(G_5)|),$ $3\in \textrm{Div}
(|N_G(G_{7}):C_G(G_{7})|)\cap \textrm{Div}
(|N_G(G_{23}):C_G(G_{23})|) \cap \textrm{Div}
(|N_G(G_{37}):C_G(G_{37})|),$ $5\in \textrm{Div}
(|N_G(G_{11}):C_G(G_{11})|)\cap \textrm{Div}
(|N_G(G_{31}):C_G(G_{31})|),$ $7\in \textrm{Div}
(|N_G(G_{29}):C_G(G_{29})|)\cap \textrm{Div}
(|N_G(G_{43}):C_G(G_{43})|).$ These data are enough.
\item[(25)]For $G=B$,  $\pi(B)=\{2,3,5,7,11,13,17,19,23,31,47\}$.
$2\in \textrm{Div} (|N_G(G_3):C_G(G_3)|)\cap \textrm{Div}
(|N_G(G_5):C_G(G_5)|)\cap \textrm{Div} (|N_G(G_{17}):C_G(G_{17})|),$
$3\in \textrm{Div} (|N_G(G_{7}):C_G(G_{7})|)\cap \textrm{Div}
(|N_G(G_{13}):C_G(G_{13})|)$$ \cap \textrm{Div}
(|N_G(G_{19}):C_G(G_{19})|)\cap \textrm{Div}
(|N_G(G_{37}):C_G(G_{37})|),$ $5\in \textrm{Div}
(|N_G(G_{11}):C_G(G_{11})|)\cap \textrm{Div}
(|N_G(G_{23}):C_G(G_{23})|)\cap \textrm{Div}
(|N_G(G_{31}):C_G(G_{31})|),$ $23\in \textrm{Div}
(|N_G(G_{47}):C_G(G_{47})|).$ These data are enough.
\item[(26)]For  $G=M$, $\pi(M)=\{2,3,5,7,11,13,17,19,23,29,31,41,47,59,71\}$.
$2\in \textrm{Div} (|N_G(G_3):C_G(G_3)|)\cap \textrm{Div}
(|N_G(G_5):C_G(G_5)|)\cap \textrm{Div} (|N_G(G_{17}):C_G(G_{17})|),$
$3\in \textrm{Div} (|N_G(G_{7}):C_G(G_{7})|)\cap \textrm{Div}
(|N_G(G_{13}):C_G(G_{13})|) $$\cap \textrm{Div}
(|N_G(G_{19}):C_G(G_{19})|)\cap \textrm{Div}
(|N_G(G_{37}):C_G(G_{37})|),$ \newline 
$5\in \textrm{Div}
(|N_G(G_{11}):C_G(G_{11})|)\cap  \textrm{Div}
(|N_G(G_{23}):C_G(G_{23})|)$$\cap \textrm{Div}
(|N_G(G_{31}):C_G(G_{31})|)\cap \textrm{Div}
(|N_G(G_{41}):C_G(G_{41})|),$$7\in \textrm{Div}
(|N_G(G_{71}):C_G(G_{71})|),$ $23\in \textrm{Div}
(|N_G(G_{47}):C_G(G_{47})|),$  \newline
$29\in \textrm{Div}
(|N_G(G_{59}):C_G(G_{59})|).$ These data are enough.
\end{itemize}

}

\end{document}